\newtheorem{theorem}{Theorem}
\theoremstyle{definition}
\newtheorem{defn}{Definition}
\author{S.Gulzar$^{\dagger,*},$ Ravinder Kumar$^{1}$, Mudassir A Bhat$^{1,2}$}
\date{}
\begin{document}
\title{On operators preserving inequalities between polynomials }
\maketitle
\begin{center}
$^{\dagger}$Department of Mathematics, GCET Kashmir, India\\
$^1$Department of Mathematics, Chandigarh University, India\\
$^{1,2}$Department of Mathematics, GDC Anantnag, Kashmir, India
\end{center}
\begin{abstract}
In this review paper, we explore operator aspects in extremal properties of Bernstein-type polynomial inequalities. We shall also see that a linear operator which send polynomials to polynomials and have zero-preserving property naturally preserve Bernstein's inequality.
\end{abstract}
-------------------------------\\
{$^*$ Corresponding author}\\ $^*$email: sgmattoo@gmail.com\\
\textbf{Keywords}: Polynomials, Inequalities in complex domain, Bernstein's Inequality, Gauss-Lucas theorem.

\section{Introduction}
Polynomials play an important role in applied sciences in general and are fundamental objects in harmonic and complex analysis in particular. Polynomials find their place everywhere in Mathematics. The extremal properties of polynomials are extensively used in approximation theory and numerical analysis. The problems related to applied mathematics are dependent on polynomials to a greater degree.
The study of extremal properties of the derivatives of polynomials started with some scientific investigations by well-known Russian chemist Dmitri Mendeleev \cite{mmr}. Mathematically, Mendeleev's problem amounts to estimate $|p^\prime(t)|$ on $-1\leq t\leq 1$ for a quadratic polynomial $p(t)$ satisfying $-1\leq p(t)\leq 1$ for $-1\leq t\leq 1.$  Mendeleev was himself able to find the solution. In fact, he found that $-4\leq p^\prime(t)\leq 4$ for $t\in [-1,1]$ and $p(t)=1-2t^2$ is the extemal polynomial. Mendeleev communicated his conclusions to distinguished Russian Mathematician Andrey Andreyevich Markov \cite{markov}, who extended Mendeleev's result to general real polynomials and obtained following generalization.
\begin{theorem}
If a polynomial $p(t)$ of degree $n$ with real coefficients satisfy $-1\leq p(t)\leq 1$ over $[-1,1]$ then
\begin{align*}
|p^\prime(t)|\leq n^2\quad\text{for}\quad -1\leq t\leq 1.
\end{align*}
The inequality cannot be improved as equality holds for $p(t)=\cos(n\arccos t).$ 
 \end{theorem}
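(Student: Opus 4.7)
The plan is to reduce Markov's algebraic inequality on $[-1,1]$ to Bernstein's trigonometric polynomial inequality through the substitution $t=\cos\theta$, and then to handle the endpoints separately, where the Bernstein bound degenerates.

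First, setting $t=\cos\theta$, the composition $q(\theta):=p(\cos\theta)$ is a trigonometric polynomial of degree at most $n$ satisfying $|q(\theta)|\leq 1$ for every real $\theta$. Bernstein's trigonometric inequality gives $|q'(\theta)|\leq n$, which on differentiation unwinds to $|p'(\cos\theta)\sin\theta|\leq n$, i.e.,
\begin{align*}
|p'(t)|\sqrt{1-t^2}\leq n \qquad\text{for } |t|<1.
\end{align*}
This already yields $|p'(t)|\leq n^2$ on the subinterval $|t|\leq\sqrt{1-n^{-2}}$, which covers most of $[-1,1]$ but breaks down near $t=\pm 1$.

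Second, the endpoint region is where the heart of the problem lies, and by the symmetry $p(t)\mapsto p(-t)$ it suffices to estimate $|p'(1)|$. Here I would use an interpolation argument at the Chebyshev extremal nodes $x_k=\cos(k\pi/n)$, $0\leq k\leq n$, where $T_n(x_k)=(-1)^k$. Since $\deg p\leq n$, Lagrange interpolation at these $n+1$ nodes yields $p'(1)=\sum_k \ell_k'(1)\,p(x_k)$; combined with $|p(x_k)|\leq 1$ this reduces the matter to the identity $\sum_k|\ell_k'(1)|=n^2$, with equality in the overall estimate precisely when $p(x_k)=\mathrm{sgn}(\ell_k'(1))$, i.e., when $p=\pm T_n$. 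An alternative route is the classical Markov--brothers argument by contradiction: if $|p'(1)|>n^2$, then the polynomial $T_n\mp p$ (for an appropriate sign) is forced by its alternation pattern at the $x_k$ together with its behavior at $t=1$ to possess at least $n+1$ zeros in $[-1,1]$, contradicting $\deg(T_n\mp p)\leq n$.

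The main obstacle is clearly the endpoint estimate: the interior bound is a routine rewriting of Bernstein's trigonometric inequality, whereas the endpoints carry the essential content and require the full Chebyshev-extremal structure. Sharpness is then verified by direct computation with $T_n(\cos\theta)=\cos(n\theta)$, which gives $T_n'(t)=n\sin(n\theta)/\sin\theta$ for $t=\cos\theta$; letting $\theta\to 0$ yields $T_n'(1)=n^2$, matching the asserted bound.
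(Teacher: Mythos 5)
The paper states this theorem of A.~A.~Markov only as historical background and gives no proof of it, so there is nothing in the source to compare against; your proposal has to stand on its own. The strategy you choose is the classical one: Bernstein's trigonometric inequality handles the interior, and a Chebyshev--node interpolation argument handles the region near the endpoints. The interior step is correct ($q(\theta)=p(\cos\theta)$ is a real trigonometric polynomial of degree at most $n$, so $|p'(t)|\sqrt{1-t^2}\le n$ and hence $|p'(t)|\le n^2$ for $|t|\le\sqrt{1-n^{-2}}$), and so is the endpoint identity: with $x_k=\cos(k\pi/n)$ one checks $\mathrm{sgn}\,\ell_k'(1)=(-1)^k$, whence $\sum_k|\ell_k'(1)|=\sum_k(-1)^k\ell_k'(1)=T_n'(1)=n^2$. (You are quoting the \emph{trigonometric} Bernstein inequality with constant $n$, which is a genuinely different statement from the algebraic one proved in Section~1; it is classical, but it should be cited as such rather than treated as the same theorem.)

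There is, however, a genuine gap in the reduction to the endpoints. After the Bernstein step the uncovered set is the pair of intervals $\sqrt{1-n^{-2}}<|t|\le 1$, not the two points $t=\pm1$, so the sentence ``by symmetry it suffices to estimate $|p'(1)|$'' is unjustified: a bound on $|p'(\pm1)|$ says nothing about $|p'(t)|$ at nearby interior points, and that is precisely what remains to be proved. The repair is standard but must actually be carried out: run the interpolation bound $|p'(t)|\le\sum_k|\ell_k'(t)|$ for every $t\in[\cos(\pi/n),1]$ (for $n\ge2$ this interval contains the uncovered region, since $\sin(\pi/n)\ge 2/n$ gives $\cos(\pi/n)<\sqrt{1-n^{-2}}$), verify that the signs $\mathrm{sgn}\,\ell_k'(t)=(-1)^k$ persist on all of $[\cos(\pi/n),1]$ and not merely at $t=1$, so that $\sum_k|\ell_k'(t)|=|T_n'(t)|$ there, and finish with $|T_n'(\cos\theta)|=n|\sin n\theta|/|\sin\theta|\le n^2$. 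Your alternative zero-counting argument suffers from the same defect as stated, since it too is formulated only at $t=1$. With the uniform version of the endpoint lemma in place the proof closes, and the sharpness discussion via $T_n'(1)=n^2$ is fine.
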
 
This result of Markov marked the begining in the research of extremal properties of polynomials. Subsequently, his younger brother Vladimir A. Markov \cite{markov2} obtained a best possible estimate  for $s$th derivative, $|p^{(s)}(t)|$ where $s\leq n.$

It was a few years later, another Russian Mathematician Serge Bernstein \cite{BNS} while looking for its analogue for complex polynomials over  $|z|\leq 1$ obtained following theorem. 
\begin{theorem}\label{thm1}
If $f(z)$ is a complex-polynomial having degree at most $n,$ then for $|z|= 1$
\begin{align}\label{e1}
|f^\prime(z)|\leq n ~\underset{|z|=1}{\max}|f(z)|.
\end{align}
The bound is sharp as is shown by $f(z)=az^n,$ $a\neq 0.$
\end{theorem}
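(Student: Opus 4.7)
The plan is to deduce the inequality from the Gauss--Lucas theorem (present in the paper's keyword list) together with a Rouch\'e comparison applied to a one-parameter family of auxiliary polynomials. Set $M := \max_{|z|=1}|f(z)|$ and assume $M>0$, since otherwise $f\equiv 0$ and there is nothing to prove. Fix an arbitrary boundary point $z_0$ with $|z_0|=1$; the problem then reduces to showing $|f'(z_0)|\leq nM$.

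For each complex scalar $\lambda$ with $|\lambda|>1$ I would introduce the auxiliary polynomial
\[
F_\lambda(z) \;=\; \lambda M z^n - f(z),
\]
which has degree exactly $n$. On the unit circle the leading term dominates: $|\lambda M z^n|=|\lambda|M > M \geq |f(z)|$. A Rouch\'e comparison between $\lambda M z^n$ and $F_\lambda$ therefore places all $n$ zeros of $F_\lambda$ strictly inside $|z|<1$. The Gauss--Lucas theorem next confines the zeros of
\[
F_\lambda'(z) \;=\; n\lambda M z^{n-1} - f'(z)
\]
to the convex hull of the zeros of $F_\lambda$, which still lies in $|z|<1$. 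In particular $F_\lambda'(z_0)\neq 0$, or equivalently $f'(z_0)\neq n\lambda M z_0^{n-1}$ for every $\lambda$ with $|\lambda|>1$.

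The final step is to note that as $\lambda$ varies over $\{|\lambda|>1\}$ the value $n\lambda M z_0^{n-1}$ sweeps out exactly the open exterior $\{w\in\mathbb{C}: |w|>nM\}$ (since $|z_0^{n-1}|=1$ and the argument of $\lambda$ is arbitrary). Hence $f'(z_0)$ must lie in the complementary closed disc $\{|w|\leq nM\}$, which is the required bound. The sharpness assertion is immediate from $f(z)=az^n$, where both sides equal $n|a|$.

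I do not expect any real technical obstruction in executing this plan. The one substantive step is the initial recognition that a pointwise bound on $f'$ can be recast as a zero-location statement about the family $F_\lambda$; once that reformulation is in hand, the Rouch\'e comparison, the Gauss--Lucas invocation, and the final parametric sweep in $\lambda$ are all routine.
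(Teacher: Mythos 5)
Your proposal is correct and follows essentially the same route as the paper's own proof: the same auxiliary family $\lambda M z^n - f(z)$ (the paper writes it as $f(z)+\tau M z^n$ with $|\tau|>1$), the same Rouch\'e comparison to trap all its zeros in $|z|<1$, and the same appeal to Gauss--Lucas for the derivative. Your final ``parametric sweep'' over $\lambda$ is just the direct form of the paper's contradiction argument, in which $\tau$ is chosen as $-f'(\delta)/(nM\delta^{n-1})$.
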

\begin{proof}\renewcommand{\qedsymbol}{}
Let $M$ represent the maximum of $|f(z)|$ over $|z|\leq 1.$ Therefore, $|f(z)|<\tau M|z^n|$ for $\tau\in\mathbb{C}$ with $|\tau|>1$ and $|z|\leq 1.$ This implies by invoking Rouch\'{e}'s theorem that the polynomial $F(z)=f(z)+\tau Mz^n$ has no zeros outside $|z|< 1.$

To complete the proof, two important properties of the derivative are needed: (1) Linearity (2) zero preserving property for polynomials. 

The first property is obvious. The second property is a special case of Gauss Luc\'{a}s theorem. According to which for any non-constant polynomial $f(z),$ all the zeros of $ f^\prime (z)$ are completely enclosed in the smallest convex polygon formed by the zeros of $f(z).$  

Therefore by utilizing this zero-preserving property along the linearity of the derivative, we conclude that every zeros of $F^\prime(z)=f^\prime(z)+\tau nMz^{n-1}$ is in $|z|< 1.$ This implies 
\begin{align}\label{be1}
|f^\prime (z)|\leq nM|z|^{n-1} \qquad\text{for}\quad |z|\geq 1.
\end{align} 
If inequality \eqref{be1} were not true, then we could find $\delta\in\mathbb{C}$ with $|\delta|\geq 1$ such that $|f^\prime (\delta)|> nM|\delta|^{n-1}.$ Now, if we choose $\tau=-f^\prime(\delta)/nM\delta^{n-1}$ then $|\tau|>1$ and $F(\delta)=0,$ which is a  contradiction since no zero of $F(z)$ lie outside $|z|<1.$ This verifies inequality \eqref{be1} and consequently the proof is completed.
\end{proof}
In Bernstein's inequality, equality holds iff $f(z)=az^n$ where $a\neq 0.$ That is, if every zero of $f(z)$ is at origin and the inequality gets strict when $f(z)=0$ has a non-zero root. This suggests that if no zero of $f(z)$ is at origin then the upper bound in \eqref{be1} may be improved. This fact was deeply examined by Paul Erd\"{o}s which lead him to conjecture that if $f(z)$ does not vanish in open disk $|z|<1$ then in the upper bound of \eqref{be1}, $n$ can be replaced by $n/2.$ Peter D Lax \cite{lax} was the first one to prove this conjecture. In fact, he proved:
\begin{theorem}\label{t2}
 For any polynomial $f(z)$ of degree at most $n$ such that $f(z)\neq 0$ for $|z|<1$ then 
 \begin{align*}
|f^\prime (z)|\leq\frac{ n}{2} ~\underset{|z|=1}{\max}|f(z)|\quad where \quad |z|=1.
\end{align*}  
The sharpness of this inequality is shown by $f(z)=az^n+b$ where $|a|=|b|\neq 0.$
\end{theorem}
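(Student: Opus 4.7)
The plan is to associate to $f$ a companion polynomial $g$ whose modulus agrees with $|f|$ on $|z|=1$ but whose zeros lie in $|z|\le 1$, establish the pointwise comparison $|f'(z)|\le |g'(z)|$ on $|z|=1$ by the Rouch\'{e}-plus-Gauss-Luc\'{a}s strategy that drove Theorem \ref{thm1}, and then close by combining this with Bernstein's inequality applied to $g$.

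Set $g(z)=z^n\overline{f(1/\bar z)}$. Expanding $f(z)=\sum_{k=0}^n a_k z^k$ gives $g(z)=\sum_{k=0}^n\overline{a_{n-k}}z^k$, so $\deg g\le n$. On $|z|=1$, where $\bar z=1/z$, three short calculations yield: (i) $|g(z)|=|f(z)|$; (ii) the zeros of $g$ are the reflections $\alpha\mapsto 1/\bar\alpha$ of the zeros of $f$, so by the hypothesis of Theorem \ref{t2} every zero of $g$ lies in $|z|\le 1$; (iii) the identity $|g'(z)|=|nf(z)-zf'(z)|$, verified by a term-by-term comparison of Taylor coefficients.

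The heart of the argument is the Lax inequality $|f'(z)|\le |g'(z)|$ on $|z|=1$. For each $\lambda\in\mathbb{C}$ with $|\lambda|>1$ and each $z$ with $|z|=1$ we have $|\lambda g(z)|=|\lambda||f(z)|>|f(z)|$, so by Rouch\'{e}'s theorem the polynomial $F(z)=\lambda g(z)+f(z)$ has the same number of zeros in $|z|<1$ as $\lambda g$; because all $n$ zeros of $g$ lie in $|z|\le 1$, so do those of $F$. Invoking the zero-preserving property of the derivative used in the proof of Theorem \ref{thm1} (Gauss-Luc\'{a}s), every zero of $F'(z)=f'(z)+\lambda g'(z)$ again lies in $|z|\le 1$. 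Consequently $f'(z)+\lambda g'(z)\neq 0$ for every $|z|>1$ and every $\lambda$ with $|\lambda|>1$, which is equivalent to $|f'(z)|\le |g'(z)|$ on $|z|>1$; continuity extends the inequality to $|z|=1$.

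Finally, combining (iii), the Lax inequality, and Bernstein's bound (Theorem \ref{thm1}) applied to $g$ (which has the same maximum modulus $M$ on $|z|=1$ as $f$) gives $|f'(z)|+|g'(z)|\le nM$ on $|z|=1$; applying $|f'|\le |g'|$ once more yields $2|f'(z)|\le nM$, so $|f'(z)|\le (n/2)M$. Sharpness is checked directly on $f(z)=az^n+b$ with $|a|=|b|\neq 0$. The main obstacle is the Lax inequality itself: every purely pointwise manipulation of Bernstein's estimate is off by a factor of two, and it is only the Rouch\'{e}-plus-Gauss-Luc\'{a}s zero-preservation principle—precisely the mechanism isolated in the proof of Theorem \ref{thm1}—that allows the hypothesis $f(z)\neq 0$ for $|z|<1$ to do its work.
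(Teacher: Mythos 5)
Your reduction to the companion polynomial $g(z)=z^n\overline{f(1/\bar z)}$ and the pointwise comparison $|f'(z)|\le|g'(z)|$ on $|z|=1$ is the right strategy (the paper itself gives no proof of Theorem \ref{t2}, only the attribution to Lax), and the first two stages are essentially sound: your item (iii) is a correct computation, and the zero-location argument for $F=\lambda g+f$ works, modulo the technicality that when $f$ vanishes somewhere on $|z|=1$ the strict inequality $|f|<|\lambda g|$ required by Rouch\'{e} fails there (and $g$ then has boundary zeros, so counting the zeros of $F$ in the open disk no longer accounts for all $n$ of them). That is repaired by a routine perturbation or limiting argument, or by first proving $|f(z)|\le|g(z)|$ for $|z|\ge1$ via the maximum modulus principle applied to $f/g$ and locating the zeros of $f+\lambda g$ directly.

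The genuine gap is in your last paragraph. The inequality $|f'(z)|+|g'(z)|\le nM$ on $|z|=1$ does \emph{not} follow from the three ingredients you cite. Bernstein's inequality applied to $g$ gives only $|g'(z)|\le nM$; combined with $|f'|\le|g'|$ this yields $|f'(z)|\le nM$, with no factor $\tfrac12$. The identity $|g'(z)|=|nf(z)-zf'(z)|$ does not close the gap either: from $|zf'(z)|\le|nf(z)-zf'(z)|\le nM$ one cannot deduce $|zf'(z)|+|nf(z)-zf'(z)|\le nM$ --- the triangle inequality runs the wrong way, and the cited constraints are all satisfied when both quantities equal $nM$. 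The statement $|f'(z)|+|g'(z)|\le n\max_{|w|=1}|f(w)|$, valid for \emph{every} $f\in\mathcal{P}_n$, is a separate lemma, and it is precisely where the factor of two is earned. One standard proof: for $|\lambda|>1$ the polynomial $F(z)=f(z)+\lambda Mz^n$ has all its zeros in $|z|<1$ by Rouch\'{e}'s theorem, exactly as in the paper's proof of Theorem \ref{thm1}; the reverse of your Lax inequality, valid for polynomials with all zeros in $|z|\le1$, gives $|g'(z)|=|G'(z)|\le|F'(z)|=|f'(z)+n\lambda Mz^{n-1}|$ on $|z|=1$, where $G(z)=z^n\overline{F(1/\bar z)}=g(z)+\bar\lambda M$; choosing $\arg\lambda$ so that $n\lambda Mz^{n-1}$ opposes $f'(z)$ and letting $|\lambda|\to1^{+}$ yields $|g'(z)|\le nM-|f'(z)|$. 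With that lemma supplied, your final step $2|f'|\le|f'|+|g'|\le nM$ is correct.
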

Professor Ralph P. Boas suggested a research problem to find analogue of Theorem \ref{t2} in the case when $f(z)$ does not vanish for $|z|<R$ $R>0.$ The subcase $R\geq1$ of this problem was resolved by M.A. Malik \cite{malik} but the actual problem still remains open. The results of Lax and Malik were improved in several ways. In particular, M.A. Qazi \cite{qazi} obtained some remarkable improvements by involving the coefficients of polynomials in the bound. 

These kind of inequalities are usually called as Markov Bernstein type inequalities. Several authors have penned down monographs on this topic and numerous papers concerning this area have been published.
\section{$B_n$-operator}
We have seen in Bernstein's inequality that linearity and zero-preserving property of the derivative play an essential role in the proof given above. In fact, it was Qazi Ibadur Rahman and Gerhard Schmeisser who in their celebrated monograph, Analytic Theory of Polynomials \cite[p. 538]{rs}, showed that any operator satisfying these two properties satisfy Bernstein-type inequalities. They named such operators as  $B_n$-operator.
\begin{defn}
\textnormal{ Let the space of polynomials over the field of complex numbers of degree at most $n$ be denoted by $\mathcal{P}_n$ and $\mathcal{P}_{n}^{0}\subset \mathcal{P}_n$ be set of those polynomials whose all zeros lie in $|z|\leq 1.$  A linear operator $T:\mathcal{P}_n\to\mathcal{P}_n$ is called $B_n$-operator if $f\in \mathcal{P}_{n}^{0}$  then $T[f]\in \mathcal{P}_{n}^{0}.$ }
\end{defn}
In the same monograph \cite[p. 539]{rs}, the following theorem is also presented for $B_n$-operators.
\begin{theorem}\label{tb1}
Let $f(z)$ be a complex-polynomial and have degree at most $n$. Moreover, if $\varphi_n(z)=z^n$ and $T$ is a $B_n$-operator, then
\begin{align*}
|T[f](z)|\leq |T[\varphi_n](z)| ~\underset{|z|=1}{\max}|f(z)| \qquad \text{for}\quad |z|\geq 1.
\end{align*}
The inequality cannot be sharpened under this hypothesis as one can easily verify that $f(z)=az^n,$ $a\neq 0$ is extremal.
\end{theorem}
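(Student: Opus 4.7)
My approach is to mimic, at a higher level of abstraction, the Bernstein argument recalled just above: construct a one-parameter family of members of $\mathcal{P}_n^0$ via Rouch\'{e}, push the family through $T$ using the $B_n$-hypothesis, and then read off the desired pointwise bound as a non-vanishing condition. The two properties of the derivative that powered the proof of Theorem \ref{thm1} (linearity and Gauss--Luc\'{a}s zero-preservation) are, by fiat, exactly the two properties that a $B_n$-operator is required to possess, so the same template should apply essentially verbatim.

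First I would set $M := \max_{|z|=1}|f(z)|$ (assume $M>0$, else the inequality is trivial) and, for each $\tau \in \mathbb{C}$ with $|\tau|>1$, put
\[
F_\tau(z) \;:=\; f(z) + \tau M\, \varphi_n(z).
\]
On $|z|=1$ one has $|f(z)|\leq M < |\tau M z^n|$, so Rouch\'{e}'s theorem forces all $n$ zeros of $F_\tau$ into $|z|<1$; hence $F_\tau \in \mathcal{P}_n^0$. The $B_n$-property of $T$ then gives $T[F_\tau]\in \mathcal{P}_n^0$, and by linearity
\[
T[F_\tau](z) \;=\; T[f](z) + \tau M\, T[\varphi_n](z) \;\neq\; 0 \qquad \text{for every } |z|>1.
\]
Applying the same $B_n$-hypothesis to $\varphi_n = z^n \in \mathcal{P}_n^0$ also shows that $T[\varphi_n]$ is zero-free on $|z|>1$, which I will need in the next step.

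To close the argument, suppose for contradiction that $|T[f](z_0)| > M\,|T[\varphi_n](z_0)|$ for some $|z_0|>1$. Setting
\[
\tau_0 \;:=\; -\frac{T[f](z_0)}{M\, T[\varphi_n](z_0)}
\]
gives $|\tau_0|>1$ and $T[F_{\tau_0}](z_0)=0$, contradicting the display above. Thus $|T[f](z)| \leq |T[\varphi_n](z)|\,M$ for $|z|>1$, and continuity extends the bound to $|z|\geq 1$. The one step demanding a little care is confirming that $F_\tau$ really lives in $\mathcal{P}_n^0$ for every $|\tau|>1$ (rather than degenerating into a lower-degree polynomial outside the natural scope of the Rouch\'{e} count); this is handled at once by the Cauchy coefficient estimate $|a_n|\leq M < |\tau M|$, which prevents the leading coefficient of $F_\tau$ from vanishing. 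Beyond this small bookkeeping point I anticipate no deeper obstacle, since the whole argument is essentially forced by the abstract definition of a $B_n$-operator.
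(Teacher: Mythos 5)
Your proposal is correct and follows essentially the same route as the paper's own proof: form $F_\tau = f + \tau M\varphi_n$, place it in $\mathcal{P}_n^0$ via Rouch\'{e}, push it through $T$ by linearity and zero-preservation, and derive the bound by contradiction with a suitable choice of $\tau$. Your added care about the non-vanishing leading coefficient, the case $M=0$, and the non-vanishing of $T[\varphi_n]$ for $|z|>1$ only tightens details the paper leaves implicit.
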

It is pertinent to mention here that the above theorem follows in the similar lines as of the proof of Bernstein's inequality given in the introductory section. To comprehend the similitude in proofs, here we also would like to write-down a proof of above theorem.
\begin{proof}[Proof of Theorem \ref{tb1}]\renewcommand{\qedsymbol}{}
Let  us denote the maximum of $|f(z)|$ over $|z|=1$ by $M.$ Then $|f(z)|<\lambda M|z^n|$ for $\lambda\in\mathbb{C}$ with $|\lambda|>1$ and $|z|\leq 1.$ Again by employing Rouch\'{e}'s theorem, we obtain that all the zeros of polynomial $F(z)=f(z)+\lambda Mz^n$ is in $|z|< 1.$

Here, the linearity and zero-preserving property is carried by $B_n$-operator. Therefore for any $B_n$-operator $T,$  every zero of $T[F](z)=T[f]+\lambda MT[\varphi_n](z)$ reside in $|z|< 1,$ where $\varphi_n(z)=z^n.$ This implies 
\begin{align}\label{b'1}
|T[f] (z)|\leq |T[\varphi_n](z)| M \qquad\text{for}\quad |z|\geq 1.
\end{align} 
If inequality \eqref{b'1} were false, then a complex number say $w$ could be found with $|w|\geq 1$ such that $|T[f](w)|> |T[\varphi_n](w)|M.$ Now, if we choose $\lambda=-T[f](w)/T[\varphi_n](w)M$ then $|\lambda|>1$ and $F(w)=0.$ This is straightforward contradiction about the region containing zeros of $F(z).$  Hence, inequality \eqref{b'1} is valid and accordingly the proof gets complete.
\end{proof} 
From above theorem, it is evident that the operators which are linear and have zero-preserving property preserve Bernstein-type inequalities for polynomials. Although the original proof of Bernstein's inequality does not use these two properties intrinsically. But a natural question arises; Does there exists non-linear zero-preserving operators and preserve Bernstein-type inequalities? This question has not been studied extensively.

The next result which is an analogue of Theorem \ref{t2} for $B_n$-operators was also obtained by Rahman \& Schmeisser \cite[p. 539]{rs}.  
\begin{theorem}
Let $f\in\mathcal{P}_n$ and does not vanish for $|z|<1,$ then for any $B_n$-operator $T,$
\begin{align*}
|T[f](z)|\leq\frac{|T[1](z)|+ |T[\varphi_n](z)|}{2} ~\underset{|z|=1}{\max}|f(z)| \qquad \text{for}\quad |z|\geq 1,
\end{align*}
where $\varphi_n(z)=z^n.$ Indeed, this result is also best possible and $f(z)=u z^n+v,$ $|u|=|v|\neq 0,$ is a polynomial for which equality holds in this inequality.
\end{theorem}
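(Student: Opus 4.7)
The plan is to mimic Lax's proof of Theorem \ref{t2}, lifting each step to the $B_n$-operator setting. Let $M := \max_{|z|=1}|f(z)|$ and introduce the reciprocal polynomial $g(z) := z^n\overline{f(1/\bar z)}$. Because $f$ has all its zeros in $|z|\geq 1$, $g$ lies in $\mathcal{P}_n^0$ with $\deg g\leq n$, satisfies $|g(z)|=|f(z)|$ on $|z|=1$ (so $\max_{|z|=1}|g|=M$), and a factor-by-factor comparison using $|z_j|\geq 1$ for every zero $z_j$ of $f$ yields $|g(z)|\leq |f(z)|$ for $|z|\leq 1$.

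The first step would be to prove the Lax-type auxiliary inequality $|T[f](z)|\leq |T[g](z)|$ for every $|z|\geq 1$. For each $\alpha\in\mathbb{C}$ with $|\alpha|<1$, the bound $|g|\leq |f|$ on $|z|\leq 1$ together with $f\neq 0$ on $|z|<1$ forces $f-\alpha g\neq 0$ there; equivalently, its reciprocal $g-\bar\alpha f$ lies in $\mathcal{P}_n^0$. Since $T$ is a $B_n$-operator, $T[g]-\bar\alpha T[f] = T[g-\bar\alpha f]$ also belongs to $\mathcal{P}_n^0$, so $T[g](z)\neq\bar\alpha T[f](z)$ whenever $|z|>1$ and $|\alpha|<1$. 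Letting $\alpha$ sweep the open unit disk forces $|T[f](z)|\leq |T[g](z)|$ on $|z|>1$, and continuity extends the bound to $|z|=1$.

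The harder second step is to prove the sum estimate
\[
|T[f](z)| + |T[g](z)| \leq \bigl(|T[1](z)| + |T[\varphi_n](z)|\bigr)\cdot M \quad \text{for }|z|\geq 1,
\]
since, combined with Step~1, it gives $2|T[f]|\leq |T[f]|+|T[g]|\leq (|T[1]|+|T[\varphi_n]|)M$, which is exactly the theorem. The starting observation is that for every $|\beta|=1$ the polynomial $f+\beta g$ is self-inversive with all its zeros on $|z|=1$, hence lies in $\mathcal{P}_n^0$ with $\max_{|z|=1}|f+\beta g|\leq 2M$, and similarly $M(\varphi_n+\beta)\in\mathcal{P}_n^0$. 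A direct appeal to Theorem \ref{tb1} for $f+\beta g$ alone only yields $|T[f]+\beta T[g]|\leq 2M\,|T[\varphi_n]|$, which (together with Step~1) merely recovers Theorem \ref{tb1}. The sharper sum estimate is obtained by applying Theorem \ref{tb1} to the auxiliary self-inversive polynomial
\[
F_\lambda(z) := \lambda M(\varphi_n(z)+\beta) - (f(z)+\beta g(z)),\qquad \lambda\geq 1,
\]
whose self-inversive symmetry, together with a Rouché--continuity argument exploiting the zero distribution of $f$, keeps all its zeros on $|z|=1$ for every $\lambda\geq 1$; after letting $\lambda\downarrow 1$ and choosing $\beta$ so that $|T[f]+\beta T[g]|=|T[f]|+|T[g]|$, the sum estimate emerges.

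Sharpness of the theorem is witnessed by $f(z)=uz^n+v$, $|u|=|v|\neq 0$: here $g(z)=\bar v z^n+\bar u$, Step~1 closes to equality, and the triangle inequality in Step~2 becomes an equality for the right choice of $\beta$. The main obstacle is, of course, Step~2---more precisely, verifying that $F_\lambda$ remains in $\mathcal{P}_n^0$ for every $\lambda\geq 1$. The self-inversive structure of $F_\lambda$ confines its zeros either to $|z|=1$ or to reciprocal pairs $(z_0, 1/\bar z_0)$, and ruling out the latter case for the full range $\lambda\geq 1$ is the crux of the argument; it is here that the hypothesis ``$f$ has no zeros in $|z|<1$'' is used in an essential, non-trivial way beyond what Step~1 exploited.
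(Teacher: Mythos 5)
First, a remark: the paper states this theorem without proof (it is quoted from Rahman and Schmeisser), so your argument must stand on its own. Your Step 1 is correct and is the standard lemma: for $|\alpha|<1$ the polynomial $f-\alpha g$ has no zeros in $|z|<1$, its reciprocal $g-\bar\alpha f$ lies in $\mathcal{P}_n^0$, and the $B_n$-property together with the usual choice of $\alpha$ yields $|T[f](z)|\le|T[g](z)|$ for $|z|\ge1$. Your reduction of the theorem to the sum estimate $|T[f](z)|+|T[g](z)|\le\bigl(|T[1](z)|+|T[\varphi_n](z)|\bigr)M$ is also the right key lemma.

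The gap is in your proof of Step 2, and it is twofold. (i) The crucial claim --- that $F_\lambda=\lambda M(\varphi_n+\beta)-(f+\beta g)$ keeps all its zeros on $|z|=1$ for every $\lambda\ge1$ --- is exactly the part you leave unproved and yourself label ``the crux,'' so the argument is incomplete as written. (ii) More seriously, even granting that claim, the deduction fails: with $\lambda$ restricted to real values $\ge1$ (which is forced, since $F_\lambda$ is self-inversive only for real $\lambda$), the conclusion $T[F_\lambda](z)\ne0$ for $|z|>1$ only tells you that the quotient $\bigl(T[f](z)+\beta T[g](z)\bigr)/\bigl(M(T[\varphi_n](z)+\beta T[1](z))\bigr)$ avoids the ray $[1,\infty)$; this does not yield the modulus inequality $|T[f]+\beta T[g]|\le M|T[\varphi_n]+\beta T[1]|$ that you need. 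To exclude a full neighbourhood of infinity you must let $\lambda$ range over all complex numbers of modulus greater than $1$, but then the self-inversive structure your Rouch\'e--continuity mechanism relies on disappears. The standard repair avoids $F_\lambda$ entirely: for complex $\lambda$ with $|\lambda|>1$ the polynomial $f(z)+\lambda M$ has no zeros in $|z|\le1$ (maximum modulus), so your Step 1 applied to it gives $|T[f](z)+\lambda M T[1](z)|\le|T[g](z)+\bar\lambda M T[\varphi_n](z)|$ for $|z|\ge1$; now choose $\arg\lambda$ so that the right-hand side equals $|\lambda|M|T[\varphi_n](z)|-|T[g](z)|$ (possible because $|T[g](z)|\le M|T[\varphi_n](z)|$ by Theorem \ref{tb1} applied to $g$, whose sup-norm on $|z|=1$ is also $M$), bound the left-hand side below by $|T[f](z)|-|\lambda|M|T[1](z)|$, and let $|\lambda|\to1^{+}$ to obtain the sum estimate. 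Your Step 1 plus this argument completes the proof.
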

\section{Inequalities Preserving Operators}
It is obvious from Gauss Lucas theorem that $T\equiv \frac{d^{(s)}}{dz^{(s)}}$ represents a $B_n$-operator and an analogous result of Bernstein's inequality for $s$th derivative of polynomial $f(z)$ having degree $n$ takes the following form.
\begin{align*}
\left|\frac{d^{(s)}}{dz^{(s)}} f(z)\right|\leq\left|\frac{d^{(s)}}{dz^{(s)}}z^n\right|\underset{|z|=1}{\max}|f(z)| \qquad for\quad  |z|=1 \,\, (s<n).
\end{align*}
It is natural to seek characterization of operators preserving Bernstein-type inequalities. In this direction, we shall refer the readers to some well-known operators which fall under this characterization. The first result, we ought to present here is due to V.K. Jain \cite{jain}. He proved that an operator $T$ on $\mathcal{P}_n$ which sends $f(z)$ to $T[f](z):=zf^\prime (z)+\frac{n\beta}{2}f(z)$ is a $B_n$-operator if $\beta\in \mathbb{C}$ with $|\beta|\leq 1.$ In this case, the corresponding Bernstein-type inequality is;
\begin{align*}
\left|zf^\prime (z)+\frac{n\beta}{2}f(z)\right|\leq n\left|1+\frac{\beta}{2}\right| \underset{|z|=1}{\max}|f(z)| \qquad \text{for}\quad |z|=1.
\end{align*}
The next operator we are going to present here is due to Aziz et al \cite{nr2010}. In this manuscript, they developed a unified method to arrive at various polynomial inequalities including the inequalities of Bernstein and of Jain. They showed that the operator which sends an $n$th degree polynomial $f(z)$ to $f(Rz)-\beta f(rz)+\alpha\left\{\left(\frac{R+1}{r+1}\right)^n-|\beta|\right\}f(rz)$ is a $B_n$-operator, where $R\geq r\geq 1$ and $\alpha,\beta$ belongs to closed unit circle $|z|\leq 1.$ Their analogue of Bernstein's inequality also included results concerning the growth of polynomials.

For given two $n$th degree polynomials $p(z)=\sum_{j=0}^n\binom{n}{j}a_jz^j$ and $g(z)=\sum_{j=0}^n\binom{n}{j}b_jz^j,$  Shur-Szeg\"{o} composition is defined by $p*g(z)=\sum_{j=0}^n\binom{n}{j}a_jb_jz^j.$ By Shur-Szeg\"{o} composition theorem, if all the zeros of $f(z)$ and $g(z)$ are of modulus at most $r$ and $s$ respectively, then the moduli of the zeros of  $f*g$ cannot exceed $rs.$ Now, if we fix a polynomial $h(z)$ having degree $n$ and all zeros in $|z|\leq 1,$ then the operator which sends an $n$th degree polynomial $f(z)$ to $f*h(z)$ is clearly a $B_n$-operator. In this direction, the next result concerning this operator is due to Suhail Gulzar \& N.A. Rather \cite{sr}.
\begin{theorem}
Let $f(z)$ be a polynomial and have degree $n.$ Further, let $h(z)=\sum_{j=0}^nh_jz^j$ be of degree $n$ and have all zeros in $|z|\leq 1,$ then
\begin{align*}
|f*h(z)|\leq |h_n|\underset{|z|=1}{\max}|f(z)| \qquad \text{for}\quad |z|=1.
\end{align*}
The estimate is sharp and $f(z)=az^n$ is the extremal polynomial.
\end{theorem}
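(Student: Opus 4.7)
The plan is to reduce the statement to a direct application of Theorem \ref{tb1}, by recognizing that Schur--Szeg\"{o} composition with the fixed polynomial $h$ defines a $B_n$-operator. Setting $T[f] := f*h$ on $\mathcal{P}_n$, I would first verify that $T$ is a $B_n$-operator. Linearity is immediate from the coefficient-wise formula for $p*g$, since with $h$ held fixed the map $(a_0, \dots, a_n) \mapsto (a_0 b_0, \dots, a_n b_n)$ is linear. For the zero-preserving property, suppose $f \in \mathcal{P}_n^0$, i.e.\ all zeros of $f$ lie in $|z|\le 1$; by hypothesis the same holds for $h$. Invoking the Schur--Szeg\"{o} composition theorem quoted in the paragraph preceding the statement (with $r=s=1$), every zero of $f*h$ then lies in $|z|\le 1\cdot 1 = 1$, so $T[f] \in \mathcal{P}_n^0$, confirming that $T$ is a $B_n$-operator.

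Next I would compute $T[\varphi_n](z)$ for $\varphi_n(z)=z^n$. In the binomial representation only the top coefficient of $\varphi_n$ is nonzero, and for $h$ the corresponding binomial coefficient is $h_n/\binom{n}{n} = h_n$; hence $T[\varphi_n](z) = h_n z^n$. Plugging $T$ and this computation into Theorem \ref{tb1} immediately gives
\[
|f*h(z)| \le |h_n z^n|\, \max_{|z|=1}|f(z)| \qquad \text{for } |z|\ge 1,
\]
and restricting to the unit circle $|z|=1$ yields the stated inequality. Sharpness is exhibited by $f(z)=az^n$: then $f*h(z)=a\,h_n z^n$, whose modulus on $|z|=1$ equals $|h_n|\cdot \max_{|z|=1}|f(z)|$.

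The only genuine bookkeeping obstacle is aligning two different coefficient conventions: Schur--Szeg\"{o} composition is defined in the excerpt through the binomial expansion $\sum_{j=0}^n \binom{n}{j} a_j b_j z^j$, whereas the statement parametrizes $h$ as $\sum_{j=0}^n h_j z^j$ without binomial coefficients. Tracking the factor $\binom{n}{n}=1$ at the top degree makes the identification $b_n = h_n$ fall out cleanly, which is precisely why the bound takes the simple form $|h_n|$. Beyond this, the argument is entirely the template already established by the proof of Theorem \ref{tb1}.
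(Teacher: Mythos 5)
Your proposal is correct and follows exactly the route the paper intends: it observes (as the paper does in the paragraph preceding the statement) that $T[f]=f*h$ is a $B_n$-operator by linearity and the Schur--Szeg\"{o} composition theorem, computes $T[\varphi_n](z)=h_nz^n$, and then invokes Theorem \ref{tb1}. Your careful handling of the two coefficient conventions (binomial versus plain) is the only nontrivial bookkeeping, and you get it right.
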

 For the choice $h(z)=\sum_{j=0}^n\binom{n}{j}jz^j$ the above inequality condenses to Bernstein's inequality.
 
 Like Shur-Szeg\"{o} composition, several other types of 'compositions' were studied by Mathematicians. Moris Marden in his comprehensive book \cite[p. 86]{mm} also discussed a composition of two polynomials wherein he proved that if the zeros of polynomials $p(z)$ of degree $n$ and  
$g(z)=\mu_0+\binom{n}{1}\mu_1z+\ldots+\binom{n}{n}\mu_nz^n$
respectively are in $|z|\leq r$ and in the circular region: 
$|z|\leq s|z-\sigma|, s>0,$
then the zeros of
\begin{align*}
f(z)=\mu_0p(z)+\mu_1p^{\prime}(z)\dfrac{(\sigma z)}{1!}+\ldots+\mu_n p^{(n)}(z)\dfrac{(\sigma z)^n}{n!}
\end{align*}
are in the circle $|z|\leq  \max (r,rs).$
Recently, Rather et al \cite{ram} showed in the following result that the hypothesis of Marden's theorem can be relaxed with regards to the degrees of $f(z)$ and $g(z).$
\begin{theorem}\label{th1}
If every zero of $n$th degree polynomial $p(z)$ is of modulus at most $r$ and the polynomial
\begin{align*}
g(z)=\mu_0+\binom{n}{1}\mu_1z+\ldots+\binom{n}{m}\mu_mz^m
\end{align*}
has no zero outside the region $|z| \leq s|z-\sigma|$, $s>0$, then the polynomial
\begin{align*}
f(z)=\mu_0p(z)+\mu_1p^{\prime}(z)\dfrac{(\sigma z)}{1!}+\ldots+\mu_m p^{(m)}(z)\dfrac{(\sigma z)^m}{m!}
\end{align*}
has all its zeros in $|z| \leq  \max(r,rs)$
\end{theorem}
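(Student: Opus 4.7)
My plan is to use Walsh's Coincidence Theorem to reduce to the case in which $p$ has all its zeros at one point $\alpha\in\{|w|\le r\}$; the polynomial $f$ then admits a Möbius-type closed form, and the conclusion drops out of the circular-region hypothesis on $g$.

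Write $p(z)=c\prod_{i=1}^{n}(z-\alpha_i)$ with $|\alpha_i|\le r$. For each fixed $z_0$ the identity
$$p^{(j)}(z_0)=j!\sum_{\substack{T\subseteq\{1,\ldots,n\}\\|T|=n-j}}\prod_{i\in T}(z_0-\alpha_i)$$
shows that $f(z_0)$ is symmetric and of degree one in each of $\alpha_1,\ldots,\alpha_n$. Walsh's Coincidence Theorem, applied to the circular region $\{|w|\le r\}$, then supplies a point $\alpha\in\{|w|\le r\}$ such that $f(z_0;\alpha_1,\ldots,\alpha_n)=f(z_0;\alpha,\ldots,\alpha)$. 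Hence if $f(z_0)=0$, the same is true of $\tilde f(z_0)$, where $\tilde f$ is formed from $\tilde p(z):=c(z-\alpha)^n$ by the recipe used to form $f$ from $p$.

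In this coincident case $\tilde p^{(j)}(z)(\sigma z)^j/j!=c\binom{n}{j}(z-\alpha)^{n-j}(\sigma z)^j$, which assembles into
$$\tilde f(z)=c(z-\alpha)^{n}\,g\!\left(\frac{\sigma z}{z-\alpha}\right)\qquad(z\ne\alpha).$$
Suppose for contradiction that $|z_0|>\max(r,rs)$. Since $|\alpha|\le r$, automatically $z_0\ne\alpha$, so $\tilde f(z_0)=0$ forces $\sigma z_0/(z_0-\alpha)=\beta$ for some zero $\beta$ of $g$. Solving gives $z_0=\beta\alpha/(\beta-\sigma)$, whence
$$|z_0|=\frac{|\beta|\,|\alpha|}{|\beta-\sigma|}\le\frac{s|\beta-\sigma|\cdot r}{|\beta-\sigma|}=rs\le\max(r,rs),$$
using $|\alpha|\le r$ and the hypothesis $|\beta|\le s|\beta-\sigma|$. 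This contradicts $|z_0|>\max(r,rs)$.

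The main obstacle I foresee is the verification of the hypotheses of Walsh's Coincidence Theorem, which reduces to the multi-affineness just noted. A virtue of this route is that it is insensitive to the degree of $g$: only the zeros of $g$ enter through the closed-form expression for $\tilde f$, so the relaxation from $\deg g=n$ in Marden's original statement to $\deg g=m\le n$ costs nothing. Degenerate cases are harmless too: $\sigma=0$ reduces $f$ to $\mu_0 p$ and the conclusion is trivial, while $z_0=0$ or $z_0=\alpha$ would contradict $|z_0|>\max(r,rs)\ge r$ outright.
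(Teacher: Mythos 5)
The paper states Theorem \ref{th1} without proof, simply citing \cite{ram}, so there is no in-house argument to compare against; your proposal has to stand on its own, and it does. It is correct, and it follows the classical route by which Marden proves the $m=n$ case in \cite[p.~86]{mm}: Walsh's Coincidence Theorem reduces to $p(z)=c(z-\alpha)^n$ with $|\alpha|\le r$, the composite collapses to $c(z-\alpha)^n g\bigl(\sigma z/(z-\alpha)\bigr)$, and the circular-region hypothesis on the zeros of $g$ finishes the job. Your closing observation -- that nothing in this argument uses $\deg g=n$, since only the zeros of $g$ enter through the closed form -- is precisely the content of the relaxation the theorem claims over Marden's original statement. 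Two small points should be made explicit. First, Walsh's theorem is usually stated for symmetric multi-affine forms of \emph{total degree} $n$, i.e., with the coefficient of $\alpha_1\cdots\alpha_n$ nonzero; here that coefficient is $(-1)^n c\,\mu_0$, and $\mu_0=0$ is permitted by the hypotheses (the origin always lies in $|z|\le s|z-\sigma|$). Because the circular region $\{|w|\le r\}$ is bounded this causes no harm -- either perturb the form by $\epsilon\,\alpha_1\cdots\alpha_n$ and use compactness of the disk, or note that the ``missing'' zeros of the degree-$n$ form $\zeta\mapsto\tilde f(z_0)$ sit at infinity and hence outside the disk -- but the verification you flag as the main obstacle is not quite ``only multi-affineness.'' Second, solving $\sigma z_0/(z_0-\alpha)=\beta$ for $z_0$ divides by $\beta-\sigma$; this is legitimate because $\beta=\sigma$ together with $|\beta|\le s|\beta-\sigma|$ would force $\sigma=0$, a case you have already disposed of. (Also, your displayed formula for $p^{(j)}(z_0)$ drops the leading constant $c$; harmless, but worth fixing.) With these additions the proof is complete.
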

By using this Theorem, they \cite{ram} introduced an operator $N$ which takes $f\in\mathcal{P}_n$ into $N[f]\in\mathcal{P}_n$ where 
$N[f](z):=\sum_{i=0}^{m}\mu_i\left(\frac{nz}{2}\right)^i\frac{f^{(i)}(z)}{i!},$ and
 $\mu_i,$ $ i=0,1,2,\ldots,m$ are chosen such that every zeros of
$\phi(z) =  \sum_{i=0}^{m} \binom{n}{i}\mu_i z^i, m \leq n$
lie in $|z| \leq |z - \frac{n}{2}|.$ It is obvious from Theorem \ref{th1} that the operator $N$ is a $B_n$-operator. They also obtained the following Bernstein-type inequality for this operator.
 \begin{theorem}\label{co1}
If $f(z)$ is a polynomial and have degree at most $n,$  then 
\begin{align} \label{x7}
|N[f](z)|\leq |N[\varphi_n](z)| \max_{|z|=1}|f(z)|, \qquad for \quad |z|\geq 1,
\end{align}
where $\varphi_n (z)=z^n$.
This bound is also best possible and for $f(z)=e^{i \alpha} \max_{|z|=1}|f(z)| z^n$, $\alpha \in \mathbb{R},$ equality holds in \eqref{x7}.
\end{theorem}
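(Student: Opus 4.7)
The plan is to reduce the statement to a direct application of Theorem \ref{tb1} by certifying that $N$ qualifies as a $B_n$-operator. Linearity of $N$ is transparent from its defining formula $N[f](z)=\sum_{i=0}^{m}\mu_i\bigl(\tfrac{nz}{2}\bigr)^i\tfrac{f^{(i)}(z)}{i!}$, since each summand is a linear functional of $f$ composed with multiplication by a fixed polynomial. So the entire content of the proof concentrates in the zero-preserving property.

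The zero-preserving step is exactly what Theorem \ref{th1} supplies. I would apply it with $p=f$, $r=1$, $\sigma=n/2$, and $s=1$. Under these choices, the auxiliary condition of Theorem \ref{th1}, namely that $g(z)=\sum_{i=0}^{m}\binom{n}{i}\mu_i z^i$ has no zero outside $|z|\leq s|z-\sigma|$, specializes to the hypothesis imposed on $\phi$ in the definition of $N$: every zero of $\phi$ lies in $|z|\leq |z-n/2|$. Theorem \ref{th1} then asserts that the polynomial $\sum_{i=0}^{m}\mu_i p^{(i)}(z)(\sigma z)^i/i!$, which is precisely $N[f](z)$, has all its zeros in $|z|\leq\max(r,rs)=1$. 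Thus $f\in\mathcal{P}_n^{0}$ implies $N[f]\in\mathcal{P}_n^{0}$, and $N$ is a $B_n$-operator.

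Once $N$ is recognized as a $B_n$-operator, Theorem \ref{tb1} applied with $T=N$ delivers
\begin{align*}
|N[f](z)|\leq |N[\varphi_n](z)|\max_{|z|=1}|f(z)|\qquad\text{for }|z|\geq 1,
\end{align*}
which is the claimed inequality. The extremality in \eqref{x7} follows by a direct check: for $f(z)=e^{i\alpha}Mz^n$ with $M=\max_{|z|=1}|f(z)|$, the linearity of $N$ gives $N[f](z)=e^{i\alpha}M\,N[\varphi_n](z)$, so $|N[f](z)|=M\,|N[\varphi_n](z)|$ and equality is attained.

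There is essentially no analytic obstacle beyond correctly matching parameters; the Rouché-type argument has already been absorbed into Theorem \ref{tb1}, and the composition-theoretic geometry into Theorem \ref{th1}. The only point demanding care is the bookkeeping of the substitution $\sigma=n/2$, $s=1$, $r=1$ inside Theorem \ref{th1}, ensuring that the abstract composition $\sum_i\mu_i p^{(i)}(z)(\sigma z)^i/i!$ coincides with the operator $N$ and that $\max(r,rs)=1$. Once this identification is laid out, the theorem follows as a two-step corollary.
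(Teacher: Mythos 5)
Your proposal is correct and follows exactly the route the paper intends: the text preceding the theorem already notes that $N$ is a $B_n$-operator as an immediate consequence of Theorem \ref{th1} (with the same parameter matching $\sigma=n/2$, $s=r=1$ that you spell out), and the inequality then drops out of Theorem \ref{tb1}. Your explicit bookkeeping of the substitution and the linearity check for the extremal polynomial are the only details the paper leaves implicit.
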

\section{Bernstein's Inequality in $L_p$-norm}
 Let $f\in \mathcal{P}_n,$ then the Hardy space norm is defined by $$ \| f\|_{p} ={\bigg(\frac{1}{2\pi}\int\limits_{0}^{2\pi}|f(e^{i \theta})|^p d\theta \bigg)}^{1/p}, \quad 0<p< \infty;$$  and the Mahler measure by $$ \| f\|_{0} =\exp{\bigg(\frac{1}{2\pi}\int\limits_{0}^{2\pi}\ln\left(|f(e^{i \theta})|\right) d\theta \bigg)}.$$  
 It is not hard to see that $\lim_{p\to0+}\|f\|_{p}=\|f\|_{0}.$ Also note that the supremum norm satisfies $ \lim_{p\to\infty}\|f\|_{p}=\max_{|z|=1}|f(z)|.$ 
 
  If $f\in\mathcal{P}_n$ then
\begin{align}\label{i1}
\| f^{\prime}\|_{p} \leq n\,\| f\|_{p}, \quad 0\leq p\leq \infty
\end{align}
Inequality \eqref{i1} is an extension of \eqref{e1} in the Hardy space norm settings. Zygmund \cite{ZG} proved this inequality for $p\geq 1.$ It was unknown for quite a long time, whether inequality \eqref{i1} holds for $0<p<1$ or not.  Later, this case was settled by Arestov \cite{ART}. For $p=0,$ \eqref{i1} is a consequence of a remarkable inquality of De Bruijn and Springer \cite{DS}. 

Like the above extension of Bernstein's inequality by Zygmund, most of the Bernstein-type inequalities brought up in the first three sections of this review paper have also been extended in the Hardy space norm. Those inequalities are usually called as Zygmund-type inequalities. Moreover, all the Bernstein-type inequalities in sup-norm mentioned in this paper can be easily derived from Theorem \ref{tb1} by choosing the $B_n$-operator $T$ suitably.  Like sup-norm counterparts, from the expected extension of Theorem \ref{tb1}, the Zymund-type inequalities for different $B_n$-operators should be derivable in a unified manner.

In one of most cited papers of Arestov \cite{ART}, one of his results may be considered as an extension of Theorem \ref{tb1} in Hardy space norm. For $f(z)=\sum_{j=0}^{n}a_jz^j \in\mathcal{P}_n$ and $\vartheta=(\vartheta_0,\vartheta_1,\ldots,\vartheta_n)\in \mathbb{C}^{n+1},$  Arestov introduced the linear operator $\Lambda_{\vartheta}f(z)=\sum_{j=0}^{n}\vartheta_j a_jz^j$ in $\mathcal{P}_n.$  He called $\Lambda_{\vartheta}$ admissible if one of the following properties is preserved by it.
\begin{enumerate}
\item[(i)]  $f(z)$ having every zeros in $\{z\in\mathbb{C}:|z|\leq 1\}$,
\item[(ii)] $f(z)$ having every zeros in $\{z\in\mathbb{C}:|z|\geq 1\}.$
\end{enumerate}
We end this section with the following result of Arestov.
\begin{theorem}\label{l4} Let $\Theta:\mathbb{R}\to\mathbb{R}$ be a convex \& non-decreasing function and $\Psi(x)=\Theta(\log x)$ then for each $f\in\mathcal{P}_n$ and every admissible operator $\Lambda_\vartheta$, $$\int\limits_{0}^{2\pi}\Psi\left(|\Lambda_\vartheta f(e^{i \theta})|\right)d\theta \leq \int\limits_{0}^{2\pi}\Psi\left(\lambda(\vartheta) |f(e^{i \theta})|\right)d\theta,$$ where $\lambda(\vartheta)=\max (|\vartheta_0|,|\vartheta_n|).$
\end{theorem}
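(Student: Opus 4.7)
The plan is to follow Arestov's original strategy, whose two principal ingredients are a reduction of the class of test functions $\Psi$ to a small collection of elementary generators, and a passage from the qualitative admissibility hypothesis to a quantitative modulus comparison via a one-parameter family of self-inversive auxiliary polynomials.

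First I would reduce the test function. Every $\Psi$ on $(0,\infty)$ for which $\Psi\circ\exp$ is convex and non-decreasing can be written as a monotone increasing limit of non-negative linear combinations of constants, the single function $\log x$, and the ramp functions $x\mapsto(\log(x/c))_+$, $c>0$; by monotone convergence and linearity it therefore suffices to verify the asserted integral inequality for each of these generators separately. A layer-cake manipulation reduces the ramp case, in turn, to the weak-type distributional comparison
\begin{align*}
\bigl|\{\theta\in[0,2\pi):\,|\Lambda_\vartheta f(e^{i\theta})|>c\}\bigr|\le\bigl|\{\theta\in[0,2\pi):\,\lambda(\vartheta)|f(e^{i\theta})|>c\}\bigr|,\qquad c>0,
\end{align*}
while the logarithmic case is a direct consequence of Jensen's formula together with the inclusion of zeros delivered by admissibility.

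Next I would introduce, for each unimodular $\tau$, the self-inversive polynomial $f_\tau(z):=f(z)+\tau z^n\overline{f(1/\bar z)}$, which satisfies $f_\tau^*=\bar\tau f_\tau$ and hence $|f_\tau(z)|=|f_\tau^*(z)|$ on $|z|=1$. Whichever of the two zero-preserving properties $\Lambda_\vartheta$ enjoys, admissibility combined with the self-inversive structure of $f_\tau$ and a Rouch\'e-type perturbation argument---exactly parallel to the proof of Theorem~\ref{tb1} given above, with $\lambda(\vartheta) f_\tau$ playing the role of $\lambda M z^n$---delivers the pointwise modulus estimate
\begin{align*}
|\Lambda_\vartheta f_\tau(z)|\le\lambda(\vartheta)\,|f_\tau(z)|,\qquad |z|=1.
\end{align*}
Applying $\Psi$ to both sides, integrating over $|z|=1$, averaging over $\tau\in[0,2\pi)$, and invoking Jensen's inequality (in the form permitted by the convexity of $\Theta$) then collapses the $\tau$-averaged right-hand side to $\Psi(\lambda(\vartheta)|f(e^{i\theta})|)$ and yields the theorem.

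The main obstacle is the pointwise bound in the preceding paragraph: admissibility is only a qualitative statement about the location of the zeros of $\Lambda_\vartheta f_\tau$, and recovering the sharp constant $\lambda(\vartheta)=\max(|\vartheta_0|,|\vartheta_n|)$ requires careful book-keeping in the Rouch\'e perturbation to determine precisely when the family $\Lambda_\vartheta f_\tau+\mu\lambda(\vartheta) f_\tau$ first acquires a zero straddling the unit circle as $|\mu|$ is increased past $1$. The specific form $\max(|\vartheta_0|,|\vartheta_n|)$ of the extremal constant ultimately reflects the fact that the extreme zeros of a self-inversive polynomial are controlled by its leading and trailing coefficients. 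Everything else in the outline---the test-function reduction and the Fubini--Jensen manipulation that concludes the argument---is essentially standard soft analysis.
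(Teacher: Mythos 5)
The paper does not prove Theorem~\ref{l4} at all --- it is quoted from Arestov \cite{ART} without proof --- so your proposal can only be judged on its own terms. Your opening reduction (writing $\Theta$ as a monotone limit of nonnegative combinations of constants, the identity, and the ramps $u\mapsto(u-a)_+$, hence reducing $\Psi$ to $\log x$ and $\log^+(x/c)$) is correct and is indeed how Arestov's argument begins. But both quantitative steps you build on top of it fail.

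First, the weak-type distributional comparison is false. Take $f(z)=z^n-1$ and $\vartheta_j=j/n$, so that $\Lambda_\vartheta f(z)=zf'(z)/n=z^n$ and $\lambda(\vartheta)=\max(0,1)=1$; this operator is admissible by Gauss--Lucas. On $|z|=1$ one has $|\Lambda_\vartheta f(e^{i\theta})|\equiv 1$ while $\lambda(\vartheta)|f(e^{i\theta})|=2|\sin(n\theta/2)|$, so for $c$ slightly below $1$ the set $\{|\Lambda_\vartheta f|>c\}$ has full measure $2\pi$ while $\{\lambda(\vartheta)|f|>c\}$ does not. (The layer-cake identity only shows the distributional bound is \emph{sufficient}; the ramp inequality for all $c$ is an integrated, strictly weaker statement.) Arestov handles the ramp case instead via the mean-value identity $\log^+|w|=\frac{1}{2\pi}\int_0^{2\pi}\log|w-e^{i\alpha}|\,d\alpha$, which converts $\int\Psi(|g|)$ for $\Psi=\log^+(\cdot/c)$ into an average of Jensen-formula (Mahler-measure) integrals of perturbed polynomials, so that everything reduces to zero-counting, i.e.\ to the $\Psi=\log$ case for an auxiliary family. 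Second, your pointwise estimate $|\Lambda_\vartheta f_\tau(z)|\le\lambda(\vartheta)|f_\tau(z)|$ on $|z|=1$ is also false: with $f=z^n$, $\tau=1$, $\vartheta_j=j/n$ one gets $f_\tau(z)=z^n+1$ and $\Lambda_\vartheta f_\tau(z)=z^n$, and the claimed bound $1\le|e^{in\theta}+1|$ fails at $\theta=\pi/n$. This is structural, not incidental: $f_\tau$ satisfies $f_\tau^*=\bar\tau f_\tau$ and typically vanishes on the unit circle, where $\Lambda_\vartheta f_\tau$ need not; a Rouch\'e argument in the style of Theorem~\ref{tb1} yields modulus comparisons off the circle, never on it. The sharp constant $\max(|\vartheta_0|,|\vartheta_n|)$ ultimately emerges from the Jensen-formula computation (the leading and constant coefficients of $\Lambda_\vartheta f$ are $\vartheta_na_n$ and $\vartheta_0a_0$), not from a pointwise comparison on $|z|=1$, so both central planks of your outline need to be replaced.
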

One can get \eqref{i1} from above inequality by taking $\Psi(x)=x^p, $ $p\in(0,\infty).$
\section{Concluding Remarks}
From the proof of theorem \ref{tb1}, one gets an impression that it is hard to get the proof without putting the linearity of a $B_n$-operator or its zero-preserving feature in use. There are many operators defined on $\mathcal{P}_n$ which have zero-preserving property like Nagy's generalized derivative (see \cite{ptjs}) but are not linear. A natural question one can ask here is that, whether theorem \ref{tb1} holds for those operators which are not linear but does preserve location of zeros. Similarly, to what extent one can withhold the zero-preserving property and keep linearity to achieve theorem \ref{tb1}. Moreover, when it comes to the question of extending theorem \ref{tb1} in $L_p$-norm setting via theorem \ref{l4}, a representation of a $B_n$-operator is needed.   

These and many other questions concerning Markov Bernstein type inequalities are intended to be taken up in a subsequent work in the future.

\noindent \textbf{Conflict of Interest:} The authors declare that there are no conflicts of interest regarding the publication of this paper.

\end{document}